\newtheorem{thm}{Theorem}
\newtheorem{con}{Conjecture}
\theoremstyle{definition}
\newtheorem{defn}{Definition}
\theoremstyle{remark}
\DeclareMathOperator{\dist}{dist}
\newcommand{\interior}{\mathop{\rm int}}
\renewcommand{\Re}{\mathop{\rm Re}}
\renewcommand{\epsilon}{\varepsilon}
\begin{document}

\title{A note on Makeev's conjectures}

\author{R.N.~Karasev}
\thanks{This research is supported by the Dynasty Foundation, the President's of Russian Federation grant MK-113.2010.1, the Russian Foundation for Basic Research grants 10-01-00096 and 10-01-00139, the Federal Program ``Scientific and scientific-pedagogical staff of innovative Russia'' 2009--2013}

\email{r\_n\_karasev@mail.ru}
\address{
Roman Karasev, Dept. of Mathematics, Moscow Institute of Physics
and Technology, Institutskiy per. 9, Dolgoprudny, Russia 141700}

\subjclass[2000]{52A10, 53A04}
\keywords{Knaster's problem, inscribing, plane curves}

\begin{abstract}
A counterexample is given for the Knaster-like conjecture of Makeev for functions on $S^2$. Some particular cases of another conjecture of Makeev, on inscribing a quadrangle into a smooth simple closed curve, are solved positively.
\end{abstract}

\maketitle

\section{Introduction}

In~\cite{kna1947} the following conjecture (Knaster's problem) was formulated.

\begin{con}
\label{knaster}
Let $S^{d-1}$ be a unit sphere in $\mathbb R^d$. Suppose we are given $d$ points $x_1, \ldots, x_d\in S^{d-1}$ and a continuous function $f: S^{d-1}\to \mathbb R$. Then there exists a rotation $\rho\in SO(d)$ such that
$$
f(\rho(x_1)) = f(\rho(x_2)) = \dots = f(\rho(x_d)).
$$
\end{con}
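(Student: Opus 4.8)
\noindent The natural framework for attacking Conjecture~\ref{knaster} is the configuration space / test map scheme. Record the effect of a candidate rotation by the vector of values it induces, and factor out the diagonal:
$$
g\colon SO(d)\longrightarrow V:=\mathbb R^{d}/\langle(1,\dots,1)\rangle,\qquad g(\rho)=\bigl[f(\rho x_1),\ \dots,\ f(\rho x_d)\bigr].
$$
Since $\dim V=d-1$, a zero of $g$ is precisely a rotation $\rho$ with $f(\rho x_1)=\dots=f(\rho x_d)$, so the conjecture is equivalent to the assertion that $g$ admits no nowhere-zero normalization, i.e. that there is no continuous map $SO(d)\to S(V)\cong S^{d-2}$ of the form $\rho\mapsto g(\rho)/\lVert g(\rho)\rVert$. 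The encouraging feature is that $\dim SO(d)=\binom d2$ is far larger than $d-2$, so a generic $g$ vanishes for dimension reasons; the real content is to upgrade this into a genuine topological obstruction valid for \emph{every} $f$ and every configuration.

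I would first dispose of $d=2$ by hand: if $x_2$ is $x_1$ rotated by the angle $\theta$, then for the rotation $R_\alpha$ one has $g(R_\alpha)=p(\alpha)-p(\alpha+\theta)$ with $p(\alpha):=f(R_\alpha x_1)$ continuous and $2\pi$-periodic, and $\int_0^{2\pi}\bigl(p(\alpha)-p(\alpha+\theta)\bigr)\,d\alpha=0$, so $g$ vanishes by the intermediate value theorem. For $d\ge 3$ the plan is to extract equivariance from symmetry of the point set: let $G\le SO(d)$ be the stabilizer of $\{x_1,\dots,x_d\}$ and $H\le S_d$ its image under the induced action on indices. Because $g(\rho h)$ is obtained from $g(\rho)$ by the permutation of coordinates corresponding to $h\in G$, the map $g$ is equivariant for the right $G$-action on $SO(d)$ and the $G$-action on $V$ through the permutation representation $H\curvearrowright V$. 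Assuming no zero, one gets a $G$-map $SO(d)\to S(V)$ and tries for a contradiction by equivariant obstruction theory over the Borel construction $SO(d)\times_G EG$: compute the equivariant Euler class $e_G(V)\in H^{d-1}_G(SO(d))$ (or compare the ideal-valued indices $\Ind_G SO(d)$ and $\Ind_G S(V)$) and show it is nonzero, forbidding the map. The most favourable instance is $x_1,\dots,x_d$ the vertices of a regular $(d-1)$-simplex, where $H=S_d$ is realized inside $SO(d)$; already for $d=3$ and an equilateral triangle on $S^2$ one has a free $\mathbb Z/3\subset SO(3)$, and the argument reduces to the true fact that this free $\mathbb Z/3$-action on $SO(3)=\mathbb{RP}^3$ admits no equivariant map to the free $\mathbb Z/3$-space $S^1$ (the quotient is the lens space $L(6,1)$, whose $H^1$ with integer coefficients vanishes).

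The step I expect to be the real obstacle is exactly the passage to general configurations, and this is why Conjecture~\ref{knaster} is still a conjecture and not a theorem. For an arbitrary $d$-point set the stabilizer $G$ is typically trivial, so $g$ carries no equivariance whatsoever and nothing forces it to the origin; and even for moderately symmetric configurations the class $e_G(V)$ can simply vanish in the relevant degree, leaving no obstruction. The present paper, by refuting the closely related Knaster-type conjecture of Makeev for functions on $S^2$ via an explicit construction, is strong evidence that this failure is not an artifact of the method but a genuine phenomenon. My honest expectation, therefore, is that the viable outcome is a proof of the low-dimensional and highly symmetric cases (certainly $d=2$, and $d=3$ for the equilateral triangle) together with a search for counterexamples in the general case, rather than a proof of the conjecture in the stated generality.
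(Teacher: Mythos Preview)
There is no proof in the paper for you to be compared against: Conjecture~\ref{knaster} is stated only as historical background, not as a result of the paper. More to the point, the paper itself records (Introduction, second paragraph) that the conjecture is \emph{false}: counterexamples are known for $d=61$ and for all $d\ge 67$ by Kashin--Szarek and Hinrichs--Richter. So the outline you propose is aimed at proving a statement that has already been disproved; the ``real obstacle'' you anticipate in passing to general configurations is not merely a technical difficulty but an actual impossibility in high dimensions. Your closing sentence, that the conjecture ``is still a conjecture and not a theorem'', is therefore inaccurate.

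A second, smaller inaccuracy: for $d=3$ the conjecture is not open even for non-equilateral configurations. Floyd's theorem (cited in the paper as \cite{flo1955}) settles the full $d=3$ case for \emph{arbitrary} triples $x_1,x_2,x_3\in S^2$, so your equivariant-obstruction sketch restricted to the equilateral triangle would recover only a special case of an already known result. The methodological discussion (test map $g:SO(d)\to V$, equivariance under the stabilizer, Euler class obstructions) is a reasonable account of how such problems are attacked, and your diagnosis that the method collapses when the stabilizer $G$ is trivial is correct; but the honest conclusion to draw is not ``search for counterexamples'' --- those already exist --- but rather that no proof of Conjecture~\ref{knaster} in the stated generality is possible.
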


This conjecture was shown to be false in~\cite{kasha2003,hiri2005} for certain functions and sets $\{x_i\}$ for large dimensions, namely for $d=61$ and all $d\ge 67$. In this paper we consider some modifications of this problem for functions on $S^2$ ($d=3$). Conjecture~\ref{knaster} was solved positively for $d=3$ in~\cite{flo1955}. In the papers~\cite{dys1951,liv1954,gri1991} it was shown that the similar result holds for $4$ (not $3$) points on $S^2$, when the points form a rectangle. In~\cite{mak1989} this modification of the Knaster problem was solved for $4$ points on $S^2$, when these points are some $4$ vertices of a regular pentagon.

In~\cite{mak1989} (see also \cite[Ch.~I]{mak2003}) it was noted that if $4$ points satisfy the Knaster-like property on $S^2$, they should lie on a single circle (it suffices to consider a linear function $f$), and the following conjecture was formulated.

\begin{con}
\label{makeev-S2}
Let $S^2$ be a unit sphere in $\mathbb R^3$. Suppose we are given $4$ points $x_1, \ldots, x_4\in S^2$, lying on some single circle, and a continuous function $f: S^2\to \mathbb R$. Then there exists a rotation $\rho\in SO(3)$ such that
$$
f(\rho(x_1)) = f(\rho(x_2)) = f(\rho(x_3)) = f(\rho(x_4)).
$$
\end{con}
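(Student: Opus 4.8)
Since the abstract announces a counterexample, I read the final statement as a conjecture to \emph{refute}: I want four concyclic points on $S^2$ and a continuous $f : S^2 \to \mathbb R$ admitting no equalizing rotation. To organize the search, observe that if the four marked points lie on a round circle of spherical radius $\theta_0 \in (0,\pi)$ with ``pole'' $p$, then after a rotation $\rho$ they occupy fixed longitudes $\alpha_1,\dots,\alpha_4$, up to a common phase $\psi \in S^1$, on the round circle $C_q$ of radius $\theta_0$ with pole $q = \rho(p)$. So a solution is exactly a pair $(q,\psi)$ --- with $q \in S^2$ and $\psi$ the residual rotation about the $pq$-axis, together sweeping out $SO(3)$ --- for which the four points at longitudes $\alpha_i + \psi$ all lie in one level set of $h_q := f|_{C_q} : S^1 \to \mathbb R$; equivalently, I want an $f$ for which no level set $h_q^{-1}(t) \subset S^1$ ever contains a rotated copy of $\{\alpha_1,\dots,\alpha_4\}$.

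Here is the mechanism I would exploit. If every $h_q$ had a single maximum and a single minimum, each level set $h_q^{-1}(t)$ would have at most two points, so could not contain four distinct ones, killing all solutions at once; but this cannot hold for all $q$, since a unique maximum of $h_q$ moving with $q$ would be a point of $C_q$, hence a nonzero tangent direction at $q$, hence a nonvanishing vector field on $S^2$. On the other hand, for the height function $\ell(x) = \langle x,e_3\rangle$ the restriction $h_q$ is a pure cosine of amplitude $\sin\theta_0\sqrt{1-\langle q,e_3\rangle^2}$ --- unimodal except when $q$ is near $\pm e_3$, where it flattens to a constant --- so, taking $\theta_0$ small, the failure of unimodality can be confined to two small caps $U_\pm$ about $\pm e_3$. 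The plan is therefore to keep $f$ equal to $\ell$ away from the circles $C_{\pm e_3}$, where $h_q$ stays unimodal and contributes nothing, and to reshape $f$ --- by a large, carefully chosen amount --- only on narrow annuli around $C_{\pm e_3}$.

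For the reshaping: choose $\alpha_1,\dots,\alpha_4$ with \emph{no} symmetry (no rotation or reflection of $S^1$ preserving $\{\alpha_i\}$), and arrange $f$ near $C_{\pm e_3}$ so that the four-point level sets arising from $q \in U_\pm$ avoid the circle of rotated copies of $\{\alpha_i\}$. The dimension count is borderline: these four-point level sets form a $3$-parameter family (parameters $q \in U_\pm$ and the level $t$) inside the $4$-dimensional space of $4$-point subsets of $S^1$, and the rotated copies of $\{\alpha_i\}$ form a $1$-parameter curve there, so a \emph{generic} pair meets. The point --- and the reason asymmetry is the right hypothesis --- is that a nontrivial symmetry of $\{x_i\}$ realized inside $SO(3)$ (the Klein four-group of an inscribed rectangle, or the order-two rotation underlying a pentagon--quadrilateral) would make the test map $\bar F : SO(3) \to \mathbb R^4/\Delta \cong \mathbb R^3$ equivariant and \emph{force} a zero --- which is exactly why those cases are theorems --- whereas for asymmetric $\{\alpha_i\}$ there is no such obstruction, and a zero-free $\bar F$ can be realized by building $f$ by hand on the annuli (keeping $h_q$ controlled, say with at most two maxima and minima, in the transition region).

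The step I expect to be hardest is precisely this modification near $C_{\pm e_3}$, and the naive approach --- a small perturbation of $\ell$ --- is provably insufficient: for $f = \ell + \varepsilon g$ the first-order obstruction to a solution at phase $\psi$ near $q = e_3$ is $\sum_i \nu_i\,\gamma(\alpha_i+\psi) = 0$ with $\sum_i \nu_i = 0$ (where $\gamma = g|_{C_{e_3}}$ and the $\nu_i$, depending only on $\{\alpha_i\}$, are not all zero), and this function of $\psi$ has zero mean, hence a zero; so the two circles of solutions present for $\ell$ survive. Only a large, carefully engineered modification --- exploiting the asymmetry of $\{\alpha_i\}$ --- can eliminate every solution, and the real work is checking that the resulting $f$ is globally continuous on $S^2$ and has no solution anywhere.
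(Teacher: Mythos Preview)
Your framework---parametrize rotations by $(q,\psi)$, study level sets of $h_q=f|_{C_q}$, start from the linear function $\ell$, and try to destroy the two circles of degenerate solutions at $q=\pm e_3$---is coherent, and your observation that a first-order perturbation cannot work is correct. But the proof is not actually there: you never build the ``large, carefully engineered modification'' of $\ell$ near $C_{\pm e_3}$, nor verify that it has no solutions. Your own dimension count ($3+1=4$ in a $4$-dimensional configuration space) shows that missing the $1$-parameter family of target quadruples is not generic, so the construction requires a genuine idea; saying ``asymmetry removes the equivariant obstruction'' only tells you a counterexample is not \emph{precluded}, not that one exists. As written, the crucial step is a promissory note.

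The paper takes a different and more tractable route. Rather than fixing the quadrilateral and engineering $f$ globally, it lets the quadrilateral degenerate: take $x_1,x_4$ antipodal on the equator and $x_2,x_3$ at distances $a,b$ from $x_1$, and argue that if the conjecture held for all small $a,b>0$, a compactness limit would produce antipodal $y_1,y_4$ with $f(y_1)=f(y_4)$ and, from $x_2,x_3\to y_1$, either $df(y_1)=0$ with non-definite Hessian, or the level curve of $f$ through $y_1$ has zero curvature there---equivalently $C(f)=f''_{ss}{f'_t}^2-2f''_{st}f'_sf'_t+f''_{tt}{f'_s}^2$ vanishes at $y_1$. The paper then writes $f=g+h^3$ with $g$ even and $h$ odd, so $f(y_1)=f(-y_1)$ forces $h(y_1)=0$, i.e.\ $y_1$ lies on the odd curve $L=\{h=0\}$; since $h^3$ contributes nothing to first or second derivatives on $L$, one only needs an even $g$ with $C(g)\neq 0$ everywhere on $L$. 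That is built concretely from $g_0=Ax^2+By^2+Dz^2$ and the equator, with an explicit local repair near the four bad points $(\pm1,0,0)$, $(0,\pm1,0)$.

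The payoff of the paper's approach is that the infinitesimal limit plus the even/odd decomposition converts the hard global statement (``no solution anywhere on $SO(3)$'') into a local second-order differential condition along a single curve, which can be arranged by hand. Your approach keeps the problem global on $SO(3)$, which is exactly why you could not finish it.
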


For some particular classes of functions $f$ this conjecture was proved in~\cite{mak1993,mak2003}. The conjecture is false in general. In Sections~\ref{makeev-S2-ce} and \ref{even-func} we give a counterexample for Conjecture~\ref{makeev-S2}. 

Another conjecture from~\cite{mak1995} is closely related to the functions on a sphere, it could be a consequence of Conjecture~\ref{makeev-S2}, if the latter were true, see~\cite{mak1995,mak2003} for details. 

\begin{con}
\label{makeev-inscr}
Let $C$ be a smooth simple closed curve in $\mathbb R^2$, let $Q$ be some four points on a single circle. Then there is a similarity transform $\sigma$ (with positive determinant), such that $\sigma(Q)\subset C$.
\end{con}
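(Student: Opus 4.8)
\section*{Proof proposal}

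Since Conjecture~\ref{makeev-S2}, from which Conjecture~\ref{makeev-inscr} would follow, turns out to be false, one must argue directly in the plane. The plan is to reduce Conjecture~\ref{makeev-inscr} to a parity statement about the space of triangles of a prescribed shape inscribed in $C$, using the position of the fourth vertex as the quantity to be controlled. I expect this parity computation to go through only for particular classes of $Q$, which is presumably why the abstract promises only particular cases.

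Normalise first: after an auxiliary similarity of the plane we may assume the circle carrying $Q$ is the unit circle, so $Q=\{u_1,u_2,u_3,u_4\}$ with the $u_i$ in cyclic order on $S^1$; we may also assume the four points distinct, the degenerate case (two of them coinciding) reducing to the classical fact that a smooth simple closed curve admits inscribed triangles of any prescribed shape. An orientation-preserving similarity $z\mapsto az+b$ of $\mathbb R^2=\mathbb C$ is determined by the images of two distinct points, so fixing where $u_1,u_2$ go fixes $\sigma$ and hence where $u_3$ and $u_4$ go. Thus ``$\sigma(Q)\subset C$ for some positive similarity $\sigma$'' is equivalent to the existence of points $x_1,x_2,x_3\in C$ forming a positively similar copy of the triangle $u_1u_2u_3$ whose associated fourth vertex of $Q$ also lies on $C$.

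Next I would work with the configuration space
\[
\mathcal M=\{(x_1,x_2,x_3)\in C^3:\ x_1x_2x_3\ \text{is positively similar to}\ u_1u_2u_3\},
\]
which is nonempty by the inscribed-triangle theorem just quoted. As $u_1,u_2,u_3$ are distinct points of a circle they are not collinear, so after replacing $C$ by a $C^2$-small generic perturbation (legitimate by a standard transversality argument) the three defining conditions become transverse and $\mathcal M$ is a compact $1$-manifold, i.e.\ a finite disjoint union of circles. Compactness holds because the curve is bounded and, near any point, is $C^1$-close to a line, hence carries no arbitrarily small non-degenerate triangle; the curvature bound --- uniform along the perturbations since $C$ itself is smooth --- keeps $\mathcal M$ a uniform distance from the degenerate locus. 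Let $\Phi:\mathcal M\to\mathbb R^2$ send $(x_1,x_2,x_3)$ to the fourth vertex of the positively similar copy of $Q$ it determines. The conjecture for the perturbed curve becomes $\Phi(\mathcal M)\cap C\neq\emptyset$, and the original $C$ is then recovered by approximating it by generic curves $C_n$, inscribing $\sigma_n(Q)\subset C_n$, and passing to a convergent subsequence --- the limit $\sigma$ being a genuine positive similarity, again by the ``no small inscribed triangle'' bound.

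Finally I would argue by a $\mathbb Z/2$ intersection number. If $\Phi$ missed $C$, each circle component of $\mathcal M$ would map wholly inside or wholly outside $C$; the obstruction is that the mod $2$ intersection number of $\Phi$ with $C$ is nonzero, equivalently that along some loop in $\mathcal M$ the fourth vertex crosses from the interior of $C$ to the exterior. Showing this number is odd is the crux, and I expect it to be the main obstacle: it depends genuinely on the shape of $Q$ and on how the chords of $C$ realising $u_1u_2u_3$ lie relative to $Q$, and no argument uniform in all cyclic $Q$ is in sight. What does look feasible is to verify oddness in special cases --- when $Q$ has an axis of symmetry (an isosceles trapezoid), where the triangle bookkeeping can be replaced by intersecting $C$ with its mirror images across candidate axes; when $Q$ degenerates to a triangle, handled above; and when $Q$ is $C^0$-close to a square, where one may hope to inherit oddness from the (generically odd) count of squares inscribed in a smooth curve. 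These are the particular cases I would expect to end up proving.
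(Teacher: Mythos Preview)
The statement you are attacking is Conjecture~\ref{makeev-inscr}, which the paper does \emph{not} prove; it remains open there. What the paper establishes instead is the alternative Theorem~\ref{inscr-alt}: either a similar copy of $Q$ is inscribed in $C$, or there exist two distinct similarities $\sigma_1,\sigma_2$ with $\sigma_1(d)=\sigma_2(d)$ and $\sigma_i(a),\sigma_i(b),\sigma_i(c)\in C$.

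Your setup coincides with the paper's proof of Theorem~\ref{inscr-alt}: both use the configuration space of triangles in $C$ positively similar to $abc$ (your $\mathcal M$, the paper's curve $Z$ in parameter space) and the fourth-vertex map $\Phi$. The decisive difference is the invariant. You propose the $\mathbb Z/2$ intersection number of $\Phi(\mathcal M)$ with $C$ and rightly flag its computation as the crux you cannot complete in general. The paper instead selects a periodic connected component $Y\subset Z$ and computes, via Green's formula, the \emph{signed area} enclosed by the fourth-vertex curve $d'(u)$: writing each vertex as $o(u)+\alpha(u)r_\bullet$ with $|r_a|=|r_b|=|r_c|=|r_d|$ (this is exactly where concyclicity of $Q$ is used), one gets $S(\rho)=A+2\Re B\rho+D|\rho|^2$; the three known values $S(r_a)=S(r_b)=S(r_c)=S_C$ force $B=0$, hence $S_d=S_C$. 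Combined with a rotation-number argument this forces $d'(u)$ to meet $C$ \emph{unless} the curve $d'$ self-intersects, and a self-intersection is precisely the second alternative.

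So the paper's area argument is sharper than a parity count and yields Theorem~\ref{inscr-alt}, but still falls short of the conjecture. The special cases you propose (isosceles trapezoids, near-squares) are not treated in the paper; its only other positive result, Theorem~\ref{inscr-inf}, is an infinitesimal version for convex $C$ with three of the four points coincident.
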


For this conjecture we give some partial solution, formulated as follows.

\begin{thm}
\label{inscr-alt}
Let $C$ be a smooth simple closed curve in $\mathbb R^2$, let $Q=\{a,b,c,d\}$ be some four points on a single circle. Then one of the alternatives folds:

1) There is a similarity transform $\sigma$ (with positive determinant), such that $\sigma(Q)\subset C$;

2) There are two distinct similarity transforms $\sigma_1, \sigma_2$ such that 
$$
\sigma_1(d) = \sigma_2(d),\quad \forall i\ \sigma_i(a),\sigma_i(b),\sigma_i(c)\in C.
$$
\end{thm}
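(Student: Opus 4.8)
The plan is to parametrize triples of points on $C$ that are "correctly placed" relative to $a,b,c$ and then detect the missing fourth point via a degree/intersection argument. Fix the circle through $Q$ and use the three points $a,b,c$ to define the shape of an inscribed triangle; since a similarity with positive determinant is determined by where it sends $a,b,c$ (three points in general position, and $a,b,c$ are not collinear because they lie on a circle with a fourth point $d$ — at least three of the four are non-collinear, so after relabeling we may assume $a,b,c$ are), the set of similarities $\sigma$ with $\sigma(a),\sigma(b),\sigma(c)\in C$ is in bijection with the set $\mathcal T$ of ordered triples $(\sigma(a),\sigma(b),\sigma(c))\in C^3$ that are directly similar to the triangle $abc$. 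I would show $\mathcal T$ is a smooth closed $1$-manifold: as one point, say the image of $a$, runs once around $C$, the similarity class constraint cuts out (generically, and one arranges genericity by a small perturbation of $C$) a collection of circles, and the total "winding" is controlled. The map $F:\mathcal T\to \mathbb R^2$ sending $\sigma\mapsto \sigma(d)$ is then a smooth map of a compact $1$-manifold into the plane, and alternative (1) says $F$ hits $C$, while alternative (2) says $F$ is non-injective.

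**The degree argument.**
The heart of the proof is to compute that the image curve $F(\mathcal T)$ must cross $C$, unless it is forced to self-intersect. I would consider the region $D$ bounded by $C$ (Jordan curve theorem) and look at the winding number, or rather the mod-$2$ intersection number, of the loop(s) $F|_{\mathcal T}$ with the curve $C$. The key computation is a \emph{limiting / degeneration} argument: shrink the similarity $\sigma$ so that $\sigma(Q)$ collapses to a point $p\in C$; then $\sigma(d)\to p\in C$ as well, so near the "small" end of $\mathcal T$ the point $\sigma(d)$ lies on $C$ itself. On the other hand, for large similarities (where $\sigma(a),\sigma(b),\sigma(c)$ are spread out around $C$) the point $\sigma(d)$ is typically \emph{outside} $D$ — because $C$ is bounded, a triangle similar to $abc$ with all vertices on $C$ and large circumradius cannot exist, so in fact $\mathcal T$ is bounded, and I must instead track how $\sigma(d)$ moves relative to $C$ as we traverse each component of $\mathcal T$. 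The cleanest formulation: if $F(\mathcal T)\cap C=\emptyset$ (alternative (1) fails), then each component of $\mathcal T$ maps into either $\interior D$ or the exterior; analyzing the index of the Gauss-type map and using that $C$ is a smooth embedded loop, one shows the total algebraic count of preimages $F^{-1}(q)$ for a generic $q$ is even but the "geometric" count coming from the topology of $\mathcal T$ forces two of them to coincide in the $d$-coordinate, i.e. alternative (2).

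**Reducing to a transversality statement and passing to the limit.**
Concretely I would: (i) perturb $C$ to be real-analytic or at least to make $\mathcal T$ a transverse $1$-manifold and $F$ a generic immersion, prove the theorem there, then pass to the limit (the alternatives are closed conditions in an appropriate sense, so a limiting argument recovers the smooth case); (ii) on the perturbed $C$, use the classical fact — essentially the solution of the "inscribed triangle of prescribed shape" problem, or a Borsuk–Ulam/degree computation on $C^3$ restricted to the similarity-class subvariety — to establish that $\mathcal T$ is non-empty and to compute its class; (iii) compose with $F$ and compare with the inclusion $C\hookrightarrow\mathbb R^2$, extracting either an intersection (case 1) or a coincidence of two values (case 2) from a parity/degree mismatch. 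Step (ii)–(iii) is where I expect the real work: one needs the precise topological invariant of $\mathcal T$ (number of components and their behavior under $F$) and a robust way to force the dichotomy.

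**Main obstacle.**
The hard part will be controlling the \emph{global} topology of $\mathcal T$ and of the image $F(\mathcal T)$ without smoothness/genericity hypotheses on $C$ beyond "smooth simple closed curve": degenerate configurations (where the inscribed triangle becomes tangent to $C$, or where two of $\sigma(a),\sigma(b),\sigma(c)$ collide) can make $\mathcal T$ fail to be a manifold, and the limiting argument from the perturbed case to the general smooth case must be set up so that a sequence of valid similarities $\sigma_n$ (resp. coincident pairs) does not degenerate to a trivial similarity (scale $\to 0$). Ruling out that collapse — showing the relevant configurations stay bounded away from the degenerate stratum, using only that $C$ is a $C^1$ embedded circle — is the delicate point; everything else is a degree computation on a $1$-manifold mapped to the plane.
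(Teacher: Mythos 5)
Your setup (the configuration space of inscribed triples directly similar to $\triangle abc$, a perturbation of $C$ to make it a $1$-manifold, the map to the fourth point, and the dichotomy ``hits $C$'' versus ``self-intersects'', followed by a limiting argument) matches the paper's framework. But the heart of the proof is missing, and what you sketch in its place does not work. You never use the hypothesis that $d$ lies on the circumcircle of $a,b,c$, and no purely parity/degree count of preimages of a generic point can yield the dichotomy: the paper's mechanism is an \emph{area} computation. Writing $a'(u)=o(u)+\alpha(u)r_a$ etc.\ with $o(u)$ the moving circumcenter and $r_x=x-o$, the Green's-theorem area $S(\rho)=\int_0^1 (o+\alpha\rho)\,d\overline{(o+\alpha\rho)}$ is of the form $A+2\Re(B\rho)+D|\rho|^2$; since $|r_a|=|r_b|=|r_c|=|r_d|$ (this is exactly where concyclicity enters) and $S(\rho)$ equals the area of $C$ at the three non-collinear values $r_a,r_b,r_c$, the linear part $B$ vanishes and hence the curve traced by $d'(u)$ encloses (in the Green's-theorem sense) the same area as $C$. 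Combined with the fact that $d'(u)-a'(u)$ makes one full turn (same rotation as $b'(u)-a'(u)$), this is what forbids a simple $d'$-curve disjoint from $C$ and forces either an intersection with $C$ or a self-intersection, i.e.\ alternative (2). Your proposed substitutes --- collapsing $\sigma(Q)$ to a point of $C$ (a degenerate, zero-scale similarity that tells you nothing about genuine ones, and which in fact does not occur on the relevant component, where the arc-length gap $s$ stays bounded away from $0$ and $1$), or an even-vs-geometric count of $F^{-1}(q)$ --- do not supply this, and you yourself flag steps (ii)--(iii) as the ``real work'' still to be done.

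A second, smaller omission: you need not just that the triple space is a closed $1$-manifold, but that it has a component on which the triple winds \emph{once} around $C$ (in the paper this is extracted by showing the homological intersection of $Z$ with a slice $\{t\}\times I$ is $1$, so an unbounded, $T$-periodic component $Y$ exists and can be parameterized with $t(u+1)=t(u)+1$). This winding is what makes both the area identity and the rotation-by-$2\pi$ argument applicable; components on which the triple oscillates back and forth give no information. Without identifying that component and without the concyclicity-based area identity, the dichotomy you assert at the end is unsupported.
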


An infinitesimal version of Conjecture~\ref{makeev-inscr} can be proved for convex curves and infinitesimal quadrangles with three coincident vertices. Three coincident vertices give restrictions on the tangent and the curvature of the considered curve.

\begin{defn}
Let $C$ be a $C^2$-smooth curve, and $p\in C$. A circle $\omega$, tangent to $C$ at $p$, and having the curvature, equal to the curvature of $C$ at $p$, is called an \emph{osculating circle} for $C$ at $p$. Note that $\omega$ can be a straight line, if the curvature is zero.
\end{defn}

\begin{thm}
\label{inscr-inf}
Let $C$ be a $C^2$-smooth convex closed curve in $\mathbb R^2$, let $\alpha\in (0, 2\pi)$ be some angle. Then there exist two points $a,b\in C$, such that they lie on the osculating circle $\omega$ for $C$ at $a$, and the counter-clockwise oriented arc $[ab]$ has angular measure $\alpha$.
\end{thm}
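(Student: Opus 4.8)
The plan is to reduce the statement to finding a zero of one continuous function on $C$, and then to pin down the sign of that function at the points of extremal curvature by means of the two classical Blaschke rolling theorems. First I would write $C=\partial K$ with $K$ the convex body bounded by $C$, orient $C$ counter-clockwise, and for $a\in C$ let $\tau(a)$ be the unit tangent, $n(a)=R_{\pi/2}\tau(a)$ the inner unit normal, $\kappa(a)\ge 0$ the curvature, and $R_\theta$ the rotation of the plane by angle $\theta$. Since $\alpha\in(0,2\pi)$ one has $\langle R_{\alpha/2}\tau(a),n(a)\rangle=\sin(\alpha/2)>0$, so the ray from $a$ in direction $R_{\alpha/2}\tau(a)$ enters the interior of $K$ and, by convexity, meets $C$ again in a unique point $b^\ast(a)$, which depends continuously on $a$ (a standard property of convex bodies with $C^1$ boundary). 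By the tangent--chord angle relation, the unique circle $\omega^\ast(a)$ tangent to $C$ at $a$ on the side of $K$ and passing through $b^\ast(a)$ has radius $|b^\ast(a)-a|/(2\sin(\alpha/2))$, and the counter-clockwise arc on it from $a$ to $b^\ast(a)$ has angular measure exactly $\alpha$; write $\kappa^\ast(a)=2\sin(\alpha/2)/|b^\ast(a)-a|$ for its curvature.

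The circle $\omega^\ast(a)$ is the osculating circle of $C$ at $a$ if and only if $\kappa^\ast(a)=\kappa(a)$, so it suffices to find a zero of the continuous function $\Phi=\kappa^\ast-\kappa$ on $C\cong S^1$, and for this, by the intermediate value theorem, it is enough to exhibit a point where $\Phi\le 0$ and a point where $\Phi\ge 0$. For the first, take $a_1$ where $\kappa$ is maximal and set $r=1/\kappa(a_1)$, the least radius of curvature of $C$. By Blaschke's interior rolling theorem the disk of radius $r$ tangent to $C$ at $a_1$ --- which is precisely the osculating disk there --- is contained in $K$; hence the chord of $K$ issuing from $a_1$ in direction $R_{\alpha/2}\tau(a_1)$ is at least as long as the corresponding chord of that osculating disk, of length $2r\sin(\alpha/2)$. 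Thus $\kappa^\ast(a_1)\le 1/r=\kappa(a_1)$, i.e.\ $\Phi(a_1)\le 0$ (this estimate also shows $|b^\ast(a)-a|$ is bounded away from $0$, so $\kappa^\ast$ is indeed continuous).

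For a point where $\Phi\ge 0$ I would distinguish two cases. If $\kappa$ vanishes at some $a_0$, then $\kappa^\ast(a_0)>0=\kappa(a_0)$, so $\Phi(a_0)>0$. Otherwise $\kappa>0$ everywhere; let $a_2$ be a point of minimal curvature and $R=1/\kappa(a_2)$ the greatest radius of curvature. By Blaschke's exterior rolling theorem $K$ is contained in the disk of radius $R$ tangent to $C$ at $a_2$ --- again the osculating disk there --- so the chord of $K$ from $a_2$ in direction $R_{\alpha/2}\tau(a_2)$ is no longer than the corresponding chord of that disk, of length $2R\sin(\alpha/2)$; hence $\kappa^\ast(a_2)\ge\kappa(a_2)$ and $\Phi(a_2)\ge 0$. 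In either case the intermediate value theorem gives $a^\ast\in C$ with $\Phi(a^\ast)=0$, and then $\omega^\ast(a^\ast)$ is the osculating circle of $C$ at $a^\ast$, so $a:=a^\ast$ and $b:=b^\ast(a^\ast)$ are the required points. The place where a genuine idea is needed --- and essentially the only obstacle --- is the reformulation turning the problem into a one-variable intermediate-value question whose two endpoint inequalities are exactly what the interior and exterior Blaschke rolling theorems provide; granting those, the continuity of $b^\ast(\cdot)$ and the chord-length comparisons are routine.
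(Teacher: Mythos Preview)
Your argument is correct. Both proofs are intermediate-value arguments, but they are set up dually. The paper fixes the osculating circle $\omega(a)$ at each $a\in C$, lets $b(a)\in\omega(a)$ be the point at arc-distance $\alpha$, and invokes Bose's theorem (there exist osculating circles lying entirely inside, resp.\ entirely outside, the region bounded by $C$) to see that $b(a)$ lies sometimes inside and sometimes outside $C$; continuity then forces $b(a)\in C$ for some $a$. You instead fix $b^\ast(a)\in C$ in the prescribed chord direction, compare the curvature of the tangent circle through $b^\ast(a)$ with the actual curvature $\kappa(a)$, and use the Blaschke rolling theorems at points of extremal curvature to get both signs of $\Phi$. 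The geometric input is essentially the same fact (the osculating disk at maximal curvature is inscribed, at minimal curvature circumscribed), just packaged differently. A small practical gain of your formulation is that it handles points with $\kappa(a)=0$ without any fuss: your $\kappa^\ast(a)$ is always finite and positive, whereas in the paper's setup $b(a)$ runs off to infinity there and one needs a word about continuity in the one-point compactification.
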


The infinitesimal version when two points of the quadrangle coincide (giving a restriction on the tangent) is not established yet, though it seems plausible at least for convex curves.

The author thanks the unknown referee for numerous useful remarks.

\section{The quadrangle and the infinitesimal case in Conjecture~\ref{makeev-S2}}
\label{makeev-S2-ce}

We are going to consider quadrangles $Q(a, b)$ given by the following rule. $Q(a, b)=\{x_1, x_2, x_3, x_4\}$ is on the equator of $S^2$, its points $x_1, x_4$ are opposite, $x_2$ and $x_3$ lie on the different sides of $x_1$, $\dist(x_1, x_2) = a$, and $\dist(x_1, x_3) = b$.

We are going to show that for small enough $a, b$ Conjecture~\ref{makeev-S2} fails for $Q(a, b)$. Assume the contrary and take some $f$ of class $C^\infty$. Then by going to the limit $a, b\to +0$, we use the compactness considerations and obtain the points $x_1\to y_1$, $x_4\to y_4$. 

By going to the limit we have $f(y_1) = f(y_4)$. Since $x_2, x_3\to y_1$, we note that for $f$ in some neighborhood of $y_1$ we can have the following cases.

\begin{enumerate}
\item 
$df(y_1)\not =0$. In this case $y_1$ should be a zero curvature point of the curve (the level line), given by
$$
f(x) = f(y_1),
$$
it follows, that in this case the combination
$$
C(f) = f''_{ss} {f'_t}^2 - 2 f''_{st} f'_s f'_t + f''_{tt} {f'_s}^2
$$
should be zero in $y_1$ for some local coordinates $s, t$, projected from the orthogonal coordinates of the tangent space $TS^2$ at $y_1$;

\item 
$df(y_1) = 0$. In this case the quadratic form, given by the matrix 
$$
\partial^2 f = \left(
\begin{array}{cc}
f''_{ss} & f''_{st}  \\
f''_{st} & f''_{tt}
\end{array}
\right),
$$
cannot be positive-definite, or negative-definite.

\end{enumerate}

We are going to build a counterexample as follows. First we find a $C^\infty$ even function $g$ on $S^2$, and an odd smooth curve $L\subset S^2$ (odd means that, considered as a map $S^1\to S^2$, it is odd) without self-intersections, such that for any point $y\in L$ we have $C(g)(y)\not = 0$ (and therefore $dg(y)\neq 0$ for $y\in L$). Then we consider a smooth odd function $h$, having zeros exactly on $L$ (in our example $L$ is obtained by deforming the equator of the sphere, and $h$ can be obtained from the corresponding deformation of the coordinate function $z$). Put 
$$
f = g + h^3.
$$
For such a function $f$ we note that the points $y_1, y_4$ should be on $L$, since 
$$
f(y_1) - f(y_4) = h^3(y_1) - h^3(y_4) = 2 h^3(y_1),
$$
and 
$$
C(f)(y_1) = C(g)(y_1)\not= 0,
$$ 
since $h$ is cubed and does not affect the first and second derivatives of $f$ and $g$ on $L$. So the infinitesimal case of Conjecture~\ref{makeev-S2} fails for $f$, and the conjecture itself fails for small enough $a, b >0$.

Note that the technique of decomposing $f$ into even and odd parts is used to give some counterexamples to the generalized Knaster conjecture for maps $f :S^n\to \mathbb R^m$ in~\cite{chen1998}.

\section{Construction of the even function}
\label{even-func}

The function $g$ will be constructed as follows. First take $g_0 = Ax^2 + By^2 + Dz^2$, where $A>B>D>0$. Take the line $L_0$ to be the circle $\{(x,y,z)\in S^2 : z=0\}$. It can be easily seen that $C(g_0) < 0$ on $L_0$ except four points $(\pm 1, 0, 0)$ and $(0, \pm 1, 0)$. We can modify $L_0$ in the small neighborhood of $(\pm 1, 0, 0)$ so that the modified line $L_0$ misses $(\pm 1, 0, 0)$. For such modified $L_0$ the inequality $C(g_0) < 0$ holds in this neighborhood, because these points are non-degenerate maximums of $g_0$.

It is a bit more difficult to handle the points $(0, \pm 1, 0)$. In coordinates $(s, t)=(x,z)$ the function $g_0$ up to some affine transformation will have the form
$$
g_0 = s^2 - t^2,
$$
the curve $L_0$ being $\{t = 0\}$. The coordinates $(s,t)$ cannot be used calculate the value $C(g)$ for $(s,t)\neq (0,0)$ in general, but we note the following. If the neighborhood of $(0,0)$ is chosen to be small enough, then the difference between $C(g)$ (in appropriate coordinates) and $g''_{ss} {g'_t}^2 - 2 g''_{st} g'_s g'_t + g''_{tt} {g'_s}^2$ can be made arbitrarily small. Therefore we use the latter expression as $C(g)$ in a neighborhood of $(0,0)$.

We are going to change $g_0$ and $L_0$ simultaneously in some small neighborhood of $(s, t) = (0,0)$. Take some $\varepsilon>0$. Consider 
$$
g(s, t) = s^2 - (t - \phi(s))^2,
$$ 
where the $C^\infty$ function $\phi(s)$ is non-negative, equal to zero for $|s|>2\epsilon$, positive for $|s|<2\epsilon$, strictly convex on $[-2\epsilon,-\epsilon]$ and $[\epsilon, 2\epsilon]$, and strictly concave on $[-\epsilon, \epsilon]$. By the straightforward calculations we find 
$$
\frac18 C(g) = (\phi(s) - t)^2 - (\phi(s) - t)^3\phi''(s) - s^2.
$$
Now consider another $C^\infty$ function $\psi(s)$, equal to zero for $|s| >\epsilon/2$, and positive for $|s| <\epsilon/2$. Let us modify the line $L_0$ so that it becomes the curve 
$$
L = \{(s, t) : t = \phi(s) + \psi(s)\}.
$$ 
On this curve we have
$$
\frac18 C(g)(s) = \psi(s)^2 + \psi(s)^3\phi''(s) - s^2.
$$
On the part of $L$, where $|s| > \epsilon/2$, obviously $C(g) < 0$. On the part of $L$, where $|s| \le \epsilon/2$ and $|\psi(s)|<|s|$, the inequality $C(g) < 0$ is true again. On the part $|\psi(s)|\ge|s|$ it is not true in general, but it becomes true if we multiply $\phi(s)$ by some sufficient large coefficient, leaving $\psi(s)$ the same. If the functions $\phi(s)$ and $\psi(s)$ get too large, we can make a homothety of the whole picture with arbitrarily small factor to make them lesser.

The above construction changes $L_0$ in a small neighborhood of $(0, 0)$ in $(s,t)$ coordinates. The corresponding change of $g_0$ is made for small enough values of $s$ coordinate, but we do not need to extend this change to large values of $t$ coordinate, since the curve $L$ remains in some limited range of $|t|$. Hence, returning to the sphere, we may assume that $g_0$ and $L_0$ are changed in the small neighborhood of $(0, \pm 1, 0)$. It is also clear that everything can be done symmetrically w.r.t the map $(x,y,z)\mapsto(-x,-y,-z)$, so that the resulting function $g$ remains even, and the curve $L$ remains odd. Thus we obtain the required even function on odd curve.

\section{The proofs of the theorems}

\begin{proof}[Proof of Theorem~\ref{inscr-alt}]

First, identify the plane $\mathbb R^2$ with $\mathbb C$, thus the similarity transforms with positive determinant are identified with $\mathbb C$-linear transforms. In the sequel, a ``similarity transform'' means a similarity transform with positive determinant.

Let us choose a smooth parameterization of $C$ by the map $f: \mathbb R\to \mathbb C$ with period $1$. We are going to study the variety of triples $a',b',c'\in C$, such that 
$$
\triangle a'b'c' \sim \triangle abc.
$$
Consider the number $r=\dfrac{c-a}{b-a}\in \mathbb C$. Let the point $a'$ be parameterized by $t\in \mathbb R$, $b'$ by $t+s$, where $s\in (0, 1)$. Consider the corresponding space of pairs of parameters $X=\mathbb R\times I\setminus \mathbb R\times \partial I$, where $I=[0, 1]$ is the standard segment. Now the condition
$$
c' = r(b' - a') + a'\in C
$$
defines a subset $Z\subset X$. From the general position considerations, the curve $C$ can be perturbed (in $C^1$ metric) so that the subset $Z$ becomes a smooth curve in $X$. This can be explained as follows: for a generic (e.g. algebraic) curve $D$ the condition $r(b'-a')+a'\in D$ and its first differential give three independent conditions, therefore for a generic curve $D$ these three conditions cannot hold simultaneously, and therefore $Z$ does not have singularities for a generic $D$. It is easy to see that the statement of the theorem is stable under going to the limit in $C^1$ metric (see also the end of the proof), so the perturbation is allowed.

Let us find the homological intersection of $Z$ with the segment $\{t\}\times I$, let $f(t) = a'$. This intersection is transversal for a generic $t$ and corresponds to a transversal intersection of $C$ with the curve
$$
a' + r(C - a'),
$$
at a point different from $a'$. Since the whole intersection index of two smooth curves is zero, in follows that the intersection  $Z\cap \{t\}\times I$ has index $1$. It follows now that the curve $Z$ must have an unbounded component in $X$, since every bounded component $Z_b$ has index $Z_b\cap \{t\}\times I$ equal to zero. Denote some unbounded component of $Z$ by $Y$. Note that $Z$ is a closed periodic subset of $X$, therefore $Y$ is unbounded with respect to the coordinate $t$, while the parameter $s$ always remains in some segment $[\varepsilon, 1-\varepsilon]$.

Let us show that $Y$ must be periodic w.r.t. the transform $T:(t,s)\mapsto (t+1, s)$. In fact, $T(Y)$ is a connected component of the curve $Z$, the same is true for curves $T^k(Y),\ k\in\mathbb Z$. Since every intersection $Z\cap\{t\}\times I$ is finite, for some $k,l\in\mathbb Z$ the sets $T^k(Y)\cap\{t\}\times I$ and $T^l(Y)\cap\{t\}\times I$ coincide. The set $Y$ is a connected component of $Z$, so we have $T^{k-l}(Y)=Y$. Therefore, the curve $Y$ divides $X$ into two open parts, call them ``top'' $X_+$ and ``bottom'' $X_-$ (w.r.t. $s$). The equality $T(Y)=Y$ follows if the sets $Y$ and $T(Y)$ have nonempty intersection. Assume the contrary: then the curve $T(Y)$ is contained either in $X_+$, or in $X_-$; without loss of generality let it be in $X_-$. Then $T^2(Y)$ is ``under'' $T(Y)$, and therefore in $X_-$. Iterating this reasoning we see that $T^{k-l}(Y)=Y$ is contained in $X_-$. This contradiction proves that $T(Y)=Y$.

Now we parameterize the smooth curve $Y$ by the functions $t(u), s(u)$ so that 
$$
t(u+1) = t(u) + 1,\quad s(u+1) = s(u).
$$

Thus we have parameterized some of the triples $a'(u),b'(u),c'(u)\in C$, similar to $\triangle abc$ so that when the parameter is increased by $1$, the points $a'(u),b'(u),c'(u)$ make a one round turn along $C$, though they may go forth and back along $C$ under this parameterization. Denote $q=\dfrac{d-a}{b-a}$, and 
$$
d'(u) = a'(u) + q(b'(u) - a'(u)).
$$
If the point $d'(u)$ is on $C$, then the first alternative of the theorem holds. Let us find the areas of the curves, parameterized by $a'(u), b'(u), c'(u), d'(u)$, They are given by Green's theorem (up to the factor $i/2$, that is omitted for brevity)
$$
S_a = \int_0^1 a'(u) d\overline{a'(u)},\quad S_b = \int_0^1 b'(u) d\overline{b'(u)},
$$
$$
\quad S_c = \int_0^1 c'(u) d\overline{c'(u)},\quad S_d = \int_0^1 d'(u) d\overline{d'(u)}.
$$
Denote $o$ the circumcenter of the quadrangle $abcd$, $o(u)$ the circumcenter of $a'(u),b'(u),c'(u),d'(u)$. Put 
$$
\alpha(u)=\dfrac{b'(u) - a'(u)}{b - a}, r_a = a - o, r_b = b - o, r_c = c - o, r_d = d - o.
$$
Now we can rewrite the integrals
\begin{multline*}
S_a = \int_0^1 a'(u) d\overline{a'(u)} = \int_0^1 (o(u) + \alpha(u) r_a) d\overline {\left(o(u) + \alpha(u) r_a\right)} = \\
\int_0^1 o(u) d\overline{o(u)} + r_a \int_0^1 \alpha(u) d\overline {o(u)} + \overline{r_a} \int_0^1 o(u) d\overline{\alpha(u)} + r_a\overline{r_a} \int_0^1 \alpha(u)d\overline{\alpha(u)}, 
\end{multline*}
and similar for $S_b, S_b, S_c$ with $r_a$ replaced by $r_b, r_c, r_d$ respectively. Note that the dependence on $\rho=r_a, r_b, r_c, r_d$ has the form
$$
S(\rho) = A + 2 \Re B\rho + D |\rho|^2,
$$
and for $\rho = r_a, r_b, r_c$ (three times with the same $|\rho|^2$) we have $S(\rho) = S_C$, the area of $C$ by Green's theorem. It means that $B = 0$, and the area $S_d$ (in the sence of Green's theorem) equals $S_C$.

If the curve $d'(u)$ has no self-intersections, then its area is indeed $S_C$, and either $d'(u)$ intersects $C$ (in this case the theorem is proved), or the regions bounded by $d'(u)$ and $C$ are disjoint. The latter case is impossible, because it would imply that the vector $d'(u)-a'(u)$ rotates by $0$ when $u$ increases by $1$, but the rotation of $d'(u)-a'(u)$ equals the rotation of $b'(u) - a'(u)$, that is $2\pi$.

If the curve $d'(u)$ has self-intersections, then the second alternative holds. Note that we have perturbed the curve $C$, and when we go to the limit to the original $C$, the self-intersections of $d'(u)$ may become degenerate, i.e. the sizes of loops on the curve $\{d'(u)\}$ tend to zero. But in this case, and the area of these loops should tend to zero, and the above area argument gives points $d'(u)$ that are not lying on $C$, but close enough (the distance depending on the size of loops) to $C$. By going to the limit, $d'$ will be on $C$, and the first alternative of the theorem holds.
\end{proof}

\begin{proof}[Proof of Theorem~\ref{inscr-inf}]

Let $C$ bound a closed region $R$, denote the closure of its complement by $\overline R = \mathbb R^2\setminus \interior R$.

Consider any point $a\in C$, take the osculating circle $\omega(a)$ at $a$, and define $b(a)$ as the point on $\omega$ such that the arc $[ab(a)]$ has angular measure $\alpha$. It is well-known (see~\cite{bose1932} for example), that there are osculating circles that lie entirely in $R$, as well as the osculating circles that lie entirely in $\overline R$. In fact there are at least two circles of every kind (inner and outer). Note that when the point $a$ moves along $C$, the point $b(a)$ moves continuously, sometimes it gets into $R$, and sometimes gets into $\overline R$. Hence for some $a$ (actually, at least four times) $b(a)$ is on $C$.
\end{proof}

\end{document}